\newtheorem{theorem}{Theorem}[section]
\newtheorem{lemma}[theorem]{Lemma}
\newtheorem*{conjecture*}{Conjecture}
\newtheorem*{claim*}{Claim}
\newtheorem*{theorem*}{Theorem}
\theoremstyle{remark}
\newtheorem{remark}[theorem]{Remark}
\theoremstyle{definition}
\newtheorem{definition}[theorem]{Definition}
\newcommand{\A}{\mathcal{A}}
\newcommand{\Z}{\mathbb{Z}}
\newcommand{\N}{\mathbb{N}}
\newcommand{\rst}[1]{\ensuremath{{\mathbin\upharpoonright}%
\raise-.5ex\hbox{$#1$}}}
\newcommand{\Aut}{{\rm Aut}}
\title{The automorphism group of a shift of subquadratic growth}
\author{Van Cyr}
\address{Bucknell University, Lewisburg, PA 17837 USA}
\email{van.cyr@bucknell.edu}
\author{Bryna Kra}
\address{Northwestern University, Evanston, IL 60208 USA}
\email{kra@math.northwestern.edu}
\subjclass[2010]{37B50 (primary), 68R15, 37B10}
\keywords{subshift, automorphism, block complexity}
\thanks{The  second author was partially supported by NSF grant $1200971$.}
\begin{document}

\begin{abstract}
For a subshift over a finite alphabet,  a measure of the complexity of the system is obtained 
by counting the number of nonempty cylinder sets of length $n$.  When this complexity grows exponentially, 
the automorphism group has been shown to be large for various classes of subshifts.  In contrast, we show 
that subquadratic growth of the complexity implies that for a topologically transitive shift $X$, the automorphism group 
$\Aut(X)$ is small: 
if $H$ is the subgroup of $\Aut(X)$ generated by the shift, then $\Aut(X)/H$ is periodic.  

\end{abstract}

\maketitle

\section{Introduction}
In this note, we study the group of automorphisms of a subshift.  More precisely, if $\A$ is a finite alphabet with the discrete topology and we endow $\A^{\Z}$ with the product topology, a closed set $X\subseteq\A^{\Z}$ is called a {\em subshift} it is invariant under the {\em left shift map} $\sigma\colon\A^{\Z}\to\A^{\Z}$ that acts on $x\in\A^{\Z}$ by $(\sigma x)(i+1):=x(i)$ for all $i\in\Z$.  An automorphism of $(X,\sigma)$ is a homeomorphism $\varphi\colon X\to X$ that commutes with $\sigma$.  We denote the group of automorphisms of $(X, \sigma)$ by $\Aut(X)$.

There are numerous theorems showing that the automorphism group can be extremely large for different classes of subshifts.
The first such result was proven by Curtis, Hedlund and Lyndon (see Hedlund~\cite{Hedlund2}), who 
showed that $\Aut(\A^{\Z})$ contains isomorphic copies of any finite group and also 
contains two involutions whose product has infinite order.  
For mixing one dimensional subshifts of finite type (of which $\A^{\Z}$ is an example), Boyle, Lind and Rudolph~\cite{BLR} showed that the automorphism group contains the 
free group on two generators, the direct sum of countably many copies of $\Z$, and the direct sum of every countable collection of finite groups.  
These theorems show that $\Aut(X)$ is large, and are proven by constructing automorphisms that generate subgroups with prescribed properties.  In contrast, we are interested in placing restrictions on $\Aut(X)$, showing that for certain classes of subshifts, the automorphism group can not contain certain structures, and so we need a different approach.

There are cases in which the automorphism group can be characterized.  For example, 
Host and Parreau~\cite{HP} gave a complete description for primitive substitutions of constant 
length and this was generalized in Salo and T\"orm\"a~\cite{SaTo}.  
Olli~\cite{Olli} described the automorphism group of Sturmian shifts, and generalizations are 
given in~\cite{DDMP}.

For each result showing that the automorphism group is large, there is a notion of complexity associated with the system and this complexity is large.  More precisely, for a shift system, let $P_X(n)$ denote the number of nonempty cylinder sets of length $n$.  For the full shift and for mixing subshifts of finite type, this complexity grows exponentially and this growth is an important ingredient in the constructions.  
We study the opposite situation, where the complexity has slow growth and we show that this places  strong restrictions on $\Aut(X)$.  In particular, we are interested in shifts $(X,\sigma)$ for which $P_X(n)$ grows subquadratically (see Section~\ref{sec:complexity} for the precise definition of the growth).  To state our main theorem, recall that a (possibly infinite) group is {\em periodic} if every element has finite order.  We show:

\begin{theorem}
\label{subquad-thm}
Suppose $(X,\sigma)$ is a topologically transitive shift of subquadratic growth and let $H$ be the subgroup of $\Aut(X)$ generated by $\sigma$.  Then $\Aut(X)/H$ is a periodic group.
\end{theorem}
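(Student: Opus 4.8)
The plan is to show that for any $\varphi\in\Aut(X)$, some power $\varphi^k$ lies in $H=\langle\sigma\rangle$, so that $\varphi H$ has finite order in $\Aut(X)/H$. The starting point is the Curtis–Hedlund–Lyndon theorem: both $\varphi$ and $\varphi^{-1}$ have a finite radius, so there is $r>0$ such that $(\varphi x)(0)$ depends only on $x\rst{[-r,r]}$ and similarly for $\varphi^{-1}$. The key idea is to exploit subquadratic growth of $P_X(n)$ to constrain how $\varphi$ can act. I would first establish a \emph{range bound}: if $\varphi$ is an automorphism of radius $r$, then the induced map on the "slope" or the speed at which information propagates must be bounded — heuristically, $\varphi$ cannot shift patterns by a speed larger than something controlled by $r$, but one needs that the \emph{same} automorphism, iterated, does not produce arbitrarily complicated behavior. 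The crucial complexity input is that, by subquadratic growth, $P_X(n+1)-P_X(n)$ is bounded along a subsequence (in fact by a Morse–Hedlund style argument, subquadratic growth forces $\liminf (P_X(n+1)-P_X(n))<\infty$, hence bounded difference infinitely often), which should be developed in Section~\ref{sec:complexity}.

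Next, using topological transitivity together with this bounded-difference property, I would argue that there is a dense set of "rigid" configurations, or more precisely that $X$ has a uniform structure: the number of \emph{distinct} ways an automorphism of radius $r$ can act is finite once $r$ is fixed. Concretely, I would consider the action of $\varphi$ on the collection of words of length $n$ for $n$ in the good subsequence: $\varphi$ induces, via its local rule and the windowing, a well-defined injection on a set whose size grows at most linearly, and the group generated by $\sigma$ acts on the same set with a single orbit structure coming from transitivity. The aim is to show $\langle\varphi,\sigma\rangle$ acts on this linearly-growing family, and that the image of $\varphi$ in the (finite or polycyclic-by-finite) symmetry group of that family has finite order; pulling this back, $\varphi^k\sigma^{-m}$ acts trivially on long enough windows for suitable $k,m$, and then a standard rigidity argument (an automorphism fixing all sufficiently long words pointwise on a transitive shift is the identity) gives $\varphi^k=\sigma^m$.

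The main obstacle I anticipate is making the second step precise: converting "subquadratic complexity" into a genuine finiteness statement about the possible dynamical behaviors of a fixed-radius automorphism, uniformly enough that one can extract a single exponent $k$ working for $\varphi$. The difficulty is that $P_X(n)$ subquadratic only controls \emph{some} scales $n$, not all, so one must work along the good subsequence and show this suffices — presumably by relating words of length $n$ and $2n$ (or $n$ and $n+r$) and using that $\varphi$ and $\varphi^{-1}$ both have radius $r$, to "bootstrap" control at one scale to nearby scales. A secondary technical point is ruling out that $\varphi$ acts on $X$ with unbounded "displacement" (i.e.\ behaves asymptotically like an irrational rotation in some coordinate), which is where transitivity, as opposed to minimality, must be used carefully; I expect one handles this by showing the displacement, being an integer-valued quantity that is invariant and continuous on a transitive system, must be eventually periodic under iteration of $\varphi$, which after passing to a power lets one reduce to the zero-displacement case where the rigidity argument applies.
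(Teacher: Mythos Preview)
Your proposal has a genuine gap at its foundation. The ``crucial complexity input'' you claim---that subquadratic growth forces $\liminf_n\bigl(P_X(n+1)-P_X(n)\bigr)<\infty$---is false. Take for instance a shift with $P_X(n)\asymp n^{3/2}$: then $P_X(n)/n^2\to 0$, so the growth is subquadratic in the sense of Definition~\ref{def:subquad}, yet $P_X(n+1)-P_X(n)\asymp n^{1/2}\to\infty$, so the first differences are unbounded along \emph{every} subsequence. What subquadratic growth does give is the weaker statement $\liminf_n\bigl(P_X(n+1)-P_X(n)\bigr)/n=0$, but this is not enough to produce a ``linearly-growing family'' of words on which $\langle\varphi,\sigma\rangle$ acts with the finiteness properties you sketch. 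Everything downstream of this step---the symmetry-group argument, the bootstrapping between scales, the displacement analysis---rests on a set of words whose size you cannot control in the way you need.

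The paper's proof takes a completely different route that you are missing. Rather than working in one dimension, it encodes the pair $(\sigma,\varphi)$ as a $\Z^2$-configuration $\eta_{\varphi,x}(i,j):=(\varphi^j x)(i)$, and shows (Lemma~\ref{complexity-relation}) that the $n\times n$ rectangular complexity of this configuration is bounded by $P_X((2N+1)n)$ for $N$ the range of $\varphi$. Subquadratic growth of $P_X$ then forces $P_{\eta_{\varphi,x}}(n,n)\le n^2/16$ for some $n$, and the Quas--Zamboni theorem (Theorem~\ref{Quas-Zamboni-thm}) yields a nonzero period vector for $\eta_{\varphi,x}$ drawn from a \emph{finite} set $F\subset\Z^2$ depending only on $n$. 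Transitivity is then used (Lemma~\ref{lemma:subquad}) to upgrade this to a single vector $(a,b)$ working for all $x\in X$, giving $\varphi^b=\sigma^{-a}$. The finiteness you were seeking comes not from a word-counting argument in one dimension but from this two-dimensional periodicity theorem; without it (or an equivalent), there is no evident mechanism to pin down a common exponent.
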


The collection of shifts of subquadratic growth includes many examples that arise naturally in symbolic dynamics and in the combinatorics of words. 
The theorem applies to Sturmian sfhits, and more generally to Arnoux-Rauzy shifts~\cite{AR} (which have linear complexity) 
and linearly recurrent systems~\cite{DHS}.
A theorem of Pansiot~\cite{Pansiot} shows that for a purely morphic shift $X$,  $P_X$  is one of $\Theta(1)$, $\Theta(n)$, $\Theta(n\log\log n)$, $\Theta(n\log n)$, or $\Theta(n^2)$, where $\Theta$ is the asymptotic growth rate, and all but the last class have subquadratic growth.  For more extensive literature on shift systems, see for example~\cite{PF}.

We conclude with a brief comment on the ideas in the proof of Theorem~\ref{subquad-thm}.   
Instead of working in the one dimensional setting, we use the one dimensional automorphisms to produce colorings of $\Z^2$.   By the complexity assumption on $\Aut(X)$, we can apply a theorem of Quas and Zamboni showing that these colorings are simple.  We then use this information on the two dimensional colorings to deduce the one dimensional result.  

\section{Complexity}
\label{sec:complexity}

\subsection{One dimensional shifts}
Throughout we assume that  $\A$ is a finite set.  For $x\in\A^{\Z}$, we write $x = (x(i)\colon i\in\Z)$ and let 
$x(i)$ denote the element of $\A$ that $x$ assigns to $i\in\Z$ .  
The shift map $\sigma\colon\A^{\Z}\to\A^{\Z}$ is defined by $(\sigma x)(i):=x(i+1)$.  With respect to the metric
$$
d(x,y):=2^{-\min\{|i|\colon x(i)\neq y(i)\}},
$$
$\A^{\Z}$ is compact and $\sigma$ is a homeomorphism. 

If $F\subset\Z$ is a finite set and $\beta\in \A^F$, then the {\em cylinder set} $[F;\beta]$ is defined as 
$$
[F;\beta] := \{x\in \A^\Z\colon x(i) = \beta(i) \text{ for all } i\in F\}.
$$
The collection of all cylinder sets is a basis for the topology of $\A^{\Z}$.

A closed, $\sigma$-invariant set $X\subseteq\A^{\Z}$ is called a {\em subshift}.  The group of all homeomorphisms from $X$ to itself that commute with $\sigma$ is called the {\em automorphism group of $X$} and is denoted $\Aut(X)$.  A classical result of Hedlund~\cite{Hedlund2} says that if $\varphi\in\Aut(X)$, then 
$\varphi$ is a {\em sliding block code}, meaning that 
there exists $N_{\varphi}\in\N$ such that for all $x\in X$ and all $i\in\Z$, $(\varphi x)(i)$ is determined entirely by $(x_{i-N_{\varphi}}, x_{i-N_{\varphi}+1},\dots,x_{i+N_{\varphi}-1},x_{i+N_{\varphi}})\in\A^{2N_{\varphi}+1}$.  An automorphism $\varphi$ has {\em range $N$} if $N_{\varphi}$ can be chosen to be $N$.

As a measure of the complexity of a given subshift $X$, the {\em block complexity function} $P_X\colon\N\to\N$ is defined by
$$
P_X(n)=\bigl\vert\{\beta\in\A^{B_n}\colon [\beta, B_n] \cap X\neq\emptyset\}\bigr\vert, 
$$
where $B_n :=\{x\in\Z\colon 0\leq x< n\}$.  
Defining the complexity $P_x(n)$ to be the number of configurations in a window of size $n$ in some fixed $x\in X$, we have that 
$$
P_X(n) \geq \sup_{x\in X} P_x(n), 
$$
and equality holds when the subshift $X$ is transitive.  

It is well-known that $P_X(n)$ is sub-multiplicative and so the {\em topological entropy} $h_{top}(X)$ of $X$ defined by
$$
h_{top}(X):=\lim_{n\to\infty}\frac{\log(P_X(n))}{n}, 
$$
is well-defined (see, for example~\cite{LM}).  For subshifts whose topological entropy is zero, one can study the {\em upper polynomial growth rate} of $X$ defined by
$$
\overline{P}(X):=\limsup_{n\to\infty}\frac{\log(P_X(n))}{\log(n)}\in[0,\infty]
$$
and the {\em lower polynomial growth rate} of $X$ given by
$$
\underline{P}(X):=\liminf_{n\to\infty}\frac{\log(P_X(n))}{\log(n)}\in[0,\infty].
$$

The classical Morse-Hedlund Theorem~\cite{MH} states that $x\in X$ is periodic if and only there exists some $n\in\N$ such that $P_x(n)\leq n$.  It follows immediately that if $X$ contains at least one aperiodic element (that is, at least one $x\in X$ for which $\sigma^ix\neq\sigma^jx$ for any $i\neq j$), then $\underline{P}(X)\geq1$.

\subsection{Two dimensional shifts}
With minor modifications, these notions extend to higher dimensions.  We only need the results in two dimensions and 
so only state the generalizations in this setting. 

If $\eta\in\A^{\Z^2}$, let $\eta(i,j)$ denote the entry $\eta$ assigns to $(i,j)\in\Z^2$.  With respect to the metric
$$
d(\eta_1,\eta_2)=2^{-\min\{\|\vec v\|\colon \eta_1(\vec v)\neq\eta_2(\vec v)\}}, 
$$
the space $\A^{\Z^2}$ is compact.  If $F\subset\Z^2$ is finite and $\beta\colon F\to\A$, then the {\em cylinder set}
$[F;\beta]$ is defined as 
$$
[F;\beta]:=\{\eta\in\A^{\Z^2}\colon\eta(i,j)=\beta(i,j)\text{ for all }(i,j)\in F\}.
$$
As for $\A^{\Z}$, the cylinder sets form a basis for the topology of $\A^{\Z^2}$.  We define the {\em left-shift} $S\colon\A^{\Z^2}\to\A^{\Z^2}$ by
$$
(S\eta)(i,j):=x(i+1,j)
$$
and the {\em down-shift} $T\colon\A^{\Z^2}\to\A^{\Z^2}$ by
$$
(T\eta)(i,j):=x(i,j+1).
$$
These maps commute and both are homeomorphisms of $\A^{\Z^2}$.

For $\eta\in\A^{\Z^2}$, we denote the {\em $\Z^2$-orbit of $\eta$} by
$$
\mathcal{O}(\eta):=\{S^aT^b\eta\colon(a,b)\in\Z^2\}  
$$
and let $\overline{\mathcal{O}}(\eta)$ denote the closure of $\mathcal{O}(x)$ in $\A^{\Z^2}$ (note that the $\Z^2$ action by the shifts $S$ and $T$ is implicit in this notation).  A closed subset $Y\subseteq\A^{\Z^2}$ is a {\em subshift} of $\A^{\Z^2}$ if it is both $S$-invariant and $T$-invariant.  In particular, for any fixed $\eta\in\A^{\Z^2}$ the set $\overline{\mathcal{O}}(\eta)$ is a subshift.

\subsection{Automorphisms and $\Z^2$ configurations}

Suppose $\varphi\in\Aut(X)$ and $x\in X$.  We define an element of $\A^{\Z^2}$ by:
\begin{equation}
\label{eq:phi-x}
\eta_{\varphi,x}(i,j):=(\varphi^jx)_i.
\end{equation}
For fixed $\varphi\in\Aut(X)$, the {\em inclusion map}  $\imath_{\varphi}\colon X\to\A^{\Z^2}$ given by $\imath_{\varphi}(x):=\eta_{\varphi,x}$ is a homeomorphism from $X$ to $\imath_{\varphi}(X)$ and satisfies $\imath_{\varphi}\circ\sigma=S\circ \imath_{\varphi}$ and $\imath_{\varphi}\circ\varphi=T\circ \imath_{\varphi}$.  That is, $\imath_{\varphi}$ is a topological conjugacy between the $\Z^2$-dynamical system $(X,\sigma,\varphi)$ and the $\Z^2$-dynamical system $(\imath_{\varphi}(X),S,T)$.  (Note that the joint action of $\sigma$ and $\varphi$ on $X$ is a $\Z^2$-dynamical system, as $\sigma$ commutes with $\varphi$.)

For a subshift $Y\subseteq\A^{\Z^2}$, the {\em rectangular complexity function} $P_Y\colon\N\times\N\to\N$ is defined by
$$
P_Y(n,k):=\left\vert\{\beta\in\A^{R_{n,k}}\colon [\beta; R_{n,k}]\cap Y\neq\emptyset\}\right\vert
$$
where $R_{n,k}:=\{(x,y)\in\Z^2\colon0\leq x<n\text{ and }0\leq y<k\}$.  (Again, we could have defined this for elements $y\in Y$ and taken a supremum and equality of these complexities holds for transitive systems.)

For $x\in X$, the crucial relationship between the (one-dimensional) block complexity $P_X$ and the (two-dimensional) rectangular complexity $P_{\overline{\mathcal{O}}(\imath_{\varphi}(x))}$ is the following:

\begin{lemma}\label{complexity-relation}
Suppose $X$ is a shift and $\varphi,\varphi^{-1}\in\Aut(X)$ are block codes of range $N$.  Then for any $x\in X$, we have 
\begin{equation}\label{eq:comp}
P_{\overline{\mathcal{O}}(\eta_{\varphi,x})}(n,k)\leq P_X(2Nk-2N+n).
\end{equation}
\end{lemma}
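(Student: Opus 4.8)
The plan is to bound the number of distinct $R_{n,k}$-patterns appearing in $\eta_{\varphi,x}$ (and hence, since $X$ is understood to be transitive here, in every element of $\overline{\mathcal O}(\eta_{\varphi,x})$) by the number of distinct one-dimensional words of a suitable length in $x$. The key observation is that a vertical shift by one in the $\Z^2$-picture corresponds to applying $\varphi$ (or $\varphi^{-1}$), and since $\varphi^{\pm 1}$ are block codes of range $N$, the value $(\varphi^{\pm 1}x)_i$ depends only on $x$ restricted to the window $[i-N, i+N]$. Iterating, the row $j$ of $\eta_{\varphi,x}$ restricted to the horizontal interval $[a, a+n-1]$ is determined by $x$ restricted to $[a - N|j|, \, a + n - 1 + N|j|]$.

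First I would fix a pattern of $\eta_{\varphi,x}$ on a translate of $R_{n,k}$, say on $\{(i,j) : a \le i < a+n, \ b \le j < b+k\}$. By the shift-equivariance $\imath_\varphi \circ \varphi = T \circ \imath_\varphi$ and $\imath_\varphi \circ \sigma = S \circ \imath_\varphi$, we may as well take $b = 0$ (replace $x$ by $\varphi^{b} x$, which does not change the set of patterns that occur), so the rows in question are indexed $j = 0, 1, \dots, k-1$. Row $j$ on the horizontal window $[a, a+n-1]$ is $\bigl((\varphi^j x)_a, \dots, (\varphi^j x)_{a+n-1}\bigr)$, and since $\varphi$ has range $N$, this $n$-tuple is a function of $x$ restricted to $[a - Nj, \, a+n-1+Nj]$. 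Taking $j = k-1$ (the worst case, as $j \ge 0$ here) shows that the \emph{entire} block of $\eta_{\varphi,x}$ on this translate of $R_{n,k}$ is determined by the single one-dimensional word $x$ restricted to the interval $[a - N(k-1), \ a + n - 1 + N(k-1)]$, which has length $n + 2N(k-1) = 2Nk - 2N + n$.

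Consequently the map sending each length-$(2Nk-2N+n)$ subword of $x$ (equivalently, of any point of $\overline{\mathcal O}(x)$, using transitivity, or more carefully passing through $\overline{\mathcal O}(\eta_{\varphi,x})$ directly) to the induced $R_{n,k}$-pattern of $\eta_{\varphi,x}$ is surjective onto the set of $R_{n,k}$-patterns occurring in $\eta_{\varphi,x}$; the same argument applied to an arbitrary $\eta' \in \overline{\mathcal O}(\eta_{\varphi,x})$, which is a limit of shifts of $\eta_{\varphi,x}$ and hence has all its finite patterns already occurring in $\eta_{\varphi,x}$, shows that $P_{\overline{\mathcal O}(\eta_{\varphi,x})}(n,k)$ counts exactly the $R_{n,k}$-patterns of $\eta_{\varphi,x}$. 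This yields the inequality $P_{\overline{\mathcal O}(\eta_{\varphi,x})}(n,k) \le P_X(2Nk - 2N + n)$.

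The only point requiring care — and the one I expect to be the main technical obstacle — is bookkeeping the index range precisely so that the constant comes out to exactly $2Nk - 2N + n$ rather than, say, $2Nk + n$: one must use that the rows range over $k$ consecutive integers and exploit the freedom to translate vertically so that the relevant rows are $j = 0, \dots, k-1$ (so the maximal vertical displacement from the base row is $k-1$, not $k$), and one must also handle the case $\varphi^j$ with $j$ possibly needing $\varphi^{-1}$ if one does not normalize $b=0$ — this is why the hypothesis is on the range of \emph{both} $\varphi$ and $\varphi^{-1}$. Once the normalization $b = 0$ is in place, only $\varphi$ itself (with nonnegative powers) is used, and the range bound $N$ gives horizontal spreading by exactly $N$ per row.
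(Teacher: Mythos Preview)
Your argument is correct and follows essentially the same route as the paper: both use that $\varphi$ has range $N$ to show inductively that row $j$ of the $\Z^2$-picture over a horizontal window of length $n$ is determined by a one-dimensional word of length $n+2Nj$, and then take $j=k-1$ to get the bound $2N(k-1)+n$. The only cosmetic difference is in the normalization step---the paper observes that $\imath_\varphi(X)$ is itself a closed $(S,T)$-invariant set containing $\overline{\mathcal O}(\eta_{\varphi,x})$, so any $R_{n,k}$-pattern already appears as the restriction of some $\eta_{\varphi,y}$ to $R_{n,k}$ itself, whereas you achieve the same reduction by replacing $x$ with $\varphi^b x$; both arguments in fact use only the range of $\varphi$, not of $\varphi^{-1}$.
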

\begin{proof}
Suppose $[\beta;R_{n,k}]\cap \imath_{\varphi}(X)\neq\emptyset$.  Since $\imath_{\varphi}$ is a topological conjugacy between $(X,\sigma,\varphi)$ and $(\imath_{\varphi}(X),S,T)$, there exists $x\in X$ such that the restriction of $\eta_{\varphi,x}$ to $R_{n,k}$ is $\beta$.  Since $\varphi$ is a block code of range $N$, the word 
$$
(x_{-N-i_1},x_{-N-i_1+1},\dots,x_{N+i_2-1},x_{N+i_2})
$$
determines $(\varphi x)_j$ for all $-i_1\leq j\leq i_2$.  It follows inductively that the word
$$
(x_{-tN-i_1},x_{-tN-i_1+1},\dots,x_{tN+i_2-1},x_{tN+i_2})
$$
determines $(\varphi^rx)_j$ for $1\leq r\leq t$ and $(t-r)N-i_1\leq j\leq(t-r)N+i_2$.  In particular, the word
$$
(x_{-(k-1)N},x_{-(k-1)N+1},\dots,x_{(k-1)N+n-1},x_{(k-1)N+n-1})
$$
determines $(\varphi^rx)_j$ for all $0\leq r<k$ and all $0\leq j<n$.

This means that the restriction of $\eta_{\varphi,x}$ to the set $R_{n,k}$ is determined by the restriction of $\eta_{\varphi,x}$ to the set $\{(x,0)\in\Z^2\colon-(k-1)N\leq x\leq(k-1)N+n-1\}$.  By definition of $\eta_{\varphi,x}$, this is determined by the word
$$
(x_{-(k-1)N},x_{-(k-1)N+1},\dots,x_{(k-1)N+n-1},x_{(k-1)N+n-1}).
$$
The number of distinct colorings of this form is $P_X(2(k-1)N+n)$ and so the number of distinct $\beta\colon R_{n,k}\to\A$ for which $[\beta;R_{n,k}]\neq\emptyset$ is at most $P_X(2Nk-2N+n)$.
\end{proof}

It follows from Lemma~\ref{complexity-relation} that if
\begin{equation}\label{eq:subquadratic}
\liminf_{n\to\infty}\frac{P_X(n)}{n^2}=0,
\end{equation}
then
\begin{equation}
\liminf_{n\to\infty}\frac{P_{\overline{\mathcal{O}}(\imath_{\varphi}(x))}(n,n)}{n^2}=0.
\end{equation}

\begin{definition}\label{def:subquad}
We say that a shift $X\subseteq\A^{\Z}$ satisfying~\eqref{eq:subquadratic} is a shift of {\em subquadratic growth}.
\end{definition}

\begin{remark}
We remark that the condition that the shift X has subquadratic growth is related to a statement about the lower polynomial growth rate of $X$.  If $\underline{P}(X)<2$, then $X$ has subquadratic growth.  On the other hand, if $X$ has subquadratic growth, then $\underline{P}(X)\leq2$.  If $\underline{P}(X)=2$, then $X$ may or may not have subquadratic growth. 
\end{remark}

\section{Shifts of subquadratic growth}\label{sec:quad}

If $\varphi\in\Aut(X)$, then as $x\in X$ varies, our main tool to study the $\Z^2$-configurations that arise as $\eta_{\varphi,x}$ (as defined in~\eqref{eq:phi-x}) is the following theorem of Quas and Zamboni:

\begin{theorem}[Quas and Zamboni~\cite{QZ}]\label{Quas-Zamboni-thm}
Suppose $\eta\in\A^{\Z^2}$ and there exists $n,k\in\N$ such that $P_{\eta}(n,k)\leq nk/16$.  Then there is a finite set $F\subset\Z^2\setminus\{(0,0)\}$ (that depends only on $n$ and $k$) and a vector $\vec v\in F$ such that $\eta(\vec x+\vec v)=\eta(\vec x)$ for all $\vec x\in\Z^2$.
\end{theorem}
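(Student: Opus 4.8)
The plan is to derive periodicity of $\eta$ from the one-dimensional Morse--Hedlund theorem, applied to suitably chosen one-dimensional slices of $\eta$, and to arrange that the period vector thereby produced has length bounded by a function of $n$ and $k$ alone. Granting for the moment that under the hypothesis we can always produce a nonzero $\vec v$ with $\eta(\vec x+\vec v)=\eta(\vec x)$ for all $\vec x$ and with $\|\vec v\|\le g(n,k)$ for some explicit $g$, the set $F:=\{\vec w\in\ZZ\setminus\{(0,0)\}\colon\|\vec w\|\le g(n,k)\}$ is finite and depends only on $n$ and $k$, as required; so it suffices to produce one short period vector. The slices are these: for a height $h\ge1$ and an offset $b\in\Z$, let $W^{b}_{h}\colon\Z\to\A^{h}$ be the bi-infinite sequence over the finite alphabet $\A^{h}$ whose value at $i$ is the height-$h$ column $\bigl(\eta(i,b),\eta(i,b+1),\dots,\eta(i,b+h-1)\bigr)$; symmetrically, for a width $w\ge1$ and an offset $a$, let $R^{a}_{w}\colon\Z\to\A^{w}$ be the corresponding width-$w$ row sequence. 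A length-$m$ factor of $W^{b}_{h}$ is exactly an $m\times h$ sub-rectangle of $\eta$, so $P_{W^{b}_{h}}(m)\le P_\eta(m,h)\le P_\eta(n,k)$ whenever $m\le n$ and $h\le k$, and the same holds for the $R^{a}_{w}$; thus every slice inherits the complexity bound, uniformly in the offsets $a,b$.

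Suppose first that $k\le16$ (the case $n\le16$ being entirely symmetric). For every $b\in\Z$ we then have $P_{W^{b}_{k}}(n)\le P_\eta(n,k)\le nk/16\le n$, so by the Morse--Hedlund theorem each $W^{b}_{k}$ is purely periodic, with some period $q_b$ satisfying $1\le q_b\le n$; equivalently, the horizontal strip $\Z\times\{b,b+1,\dots,b+k-1\}$ is horizontally $q_b$-periodic. Now a fixed row $j$ of $\eta$ is contained in the $k$ strips indexed by $j-k+1\le b\le j$, so, viewed as a bi-infinite word over $\A$, it admits each of the periods $q_{j-k+1},\dots,q_j$; since a bi-infinite word admitting periods $p$ and $q$ also admits the period $\gcd(p,q)$, row $j$ admits the period $e_j:=\gcd(q_{j-k+1},\dots,q_j)\le n$. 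Every $e_j$ divides $L:=\mathrm{lcm}(1,2,\dots,n)$, hence every row of $\eta$ admits the period $L$, and therefore $\vec v=(L,0)$ is a nonzero period vector with $\|\vec v\|\le\mathrm{lcm}(1,\dots,n)$. This disposes of the case $\min(n,k)\le16$, with $g(n,k)=\mathrm{lcm}(1,\dots,\max(n,k))$.

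The remaining case $n,k>16$ is the \emph{genuinely two-dimensional} one, and it is the main obstacle: here $nk/16>\max(n,k)$, so no individual slice is forced to have complexity at most its window length and the Morse--Hedlund theorem cannot be applied directly. The difficulty is quantitative — the one-dimensional estimate ``aperiodic $\Rightarrow P(m)\ge m+1$'' yields only a linear lower bound for the complexity of an aperiodic configuration, whereas a bound of order $nk$ is needed. The natural approach is a dichotomy. If $P_\eta(n'+1,k)=P_\eta(n',k)$ for some $1\le n'<n$, then the restriction map from $(n'+1)\times k$ sub-rectangles of $\eta$ to $n'\times k$ sub-rectangles is a bijection; hence an $n'\times k$ window of $\eta$ determines the columns immediately adjacent to it at every position, every height-$k$ strip is purely periodic with period at most $P_\eta(n',k)$, and the gluing argument of the previous paragraph again produces a horizontal period vector of norm bounded in terms of $n$ and $k$ (and symmetrically if $P_\eta(n,k'+1)=P_\eta(n,k')$ for some $k'<k$). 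Otherwise $P_\eta(\cdot,k)$ and $P_\eta(n,\cdot)$ increase strictly on all of $\{1,\dots,n\}$ and $\{1,\dots,k\}$, and one must then show that this bidirectional growth forces $P_\eta(n,k)>nk/16$; the way to do this is a double count of the sub-rectangles of $\eta$ over all sizes $(n',k')\in\{1,\dots,n\}\times\{1,\dots,k\}$, which extracts a genuinely quadratic (not merely linear) lower bound from the fact that aperiodicity makes each coordinate direction contribute ``new'' rectangles at many scales simultaneously, combined via a telescoping of the mixed second differences of $P_\eta$. Carrying this count out efficiently enough that the constant comes out as small as $16$ — rather than a much larger value — is the delicate heart of the argument, and it is precisely here that the horizontal and vertical slicings must be played off against each other rather than used in isolation.
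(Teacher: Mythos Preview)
The paper does not give a proof of this statement at all: it is quoted as Theorem~4 of Quas and Zamboni~\cite{QZ}, with the remark that ``by checking through the cases in the proof, this is exactly what they show.'' So there is no in-paper argument to compare against; the relevant question is whether your proposal stands on its own.

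It does not. Your treatment of the case $\min(n,k)\le 16$ is fine: Morse--Hedlund applied to height-$k$ (resp.\ width-$n$) strips gives horizontal (resp.\ vertical) periodicity with period at most $\max(n,k)$, and taking $L=\mathrm{lcm}(1,\dots,\max(n,k))$ synchronizes the strips. The first branch of your dichotomy in the main case $n,k>16$ is also sound: if $P_\eta(n'+1,k)=P_\eta(n',k)$ for some $n'<n$, then every $n'\times k$ block has unique left and right extensions, each height-$k$ strip is periodic with period at most $P_\eta(n,k)\le nk/16$, and the same $\mathrm{lcm}$ trick finishes. But the second branch is not proved. From strict increase of $P_\eta(\cdot,k)$ on $\{1,\dots,n\}$ and of $P_\eta(n,\cdot)$ on $\{1,\dots,k\}$ you only obtain $P_\eta(n,k)\ge\max(n,k)$, which is far weaker than $nk/16$ once both $n$ and $k$ are large. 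You yourself describe the needed ``double count \dots\ telescoping of the mixed second differences of $P_\eta$'' as ``the delicate heart of the argument'' and do not carry it out; nor is it clear that such a count, as sketched, produces any multiplicative lower bound at all, let alone one with constant $1/16$. (Strict monotonicity in each variable separately places no constraint on the mixed second differences $P_\eta(n'+1,k'+1)-P_\eta(n'+1,k')-P_\eta(n',k'+1)+P_\eta(n',k')$, which can all vanish.) This is exactly where the substantive content of the Quas--Zamboni theorem lies, and it requires genuinely two-dimensional combinatorics that go well beyond slicing plus Morse--Hedlund; as written, the proposal is an outline with its central step missing.
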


Although this is not the way their theorem is stated, by checking through the cases in the proof, this is 
exactly what they show in Theorem $4$ in~\cite{QZ}.

We use this to show: 
\begin{lemma}\label{subquadratic}
Suppose $X$ is a shift of subquadratic growth and $\varphi\in\Aut(X)$.  Then there exists a finite set $F\subset\Z^2\setminus\{(0,0)\}$ (which depends only on $\varphi$ and $X$) such that for all $x\in X$, there exists $(a_{(\varphi,x)}, b_{(\varphi,x)})\in F$ such that $S^{a_{(\varphi,x)}}T^{b_{(\varphi,x)}}\imath_{\varphi}(x)=\imath_{\varphi}(x)$.
\end{lemma}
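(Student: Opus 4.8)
The plan is to combine Lemma~\ref{complexity-relation}, whose complexity bound is uniform in $x$, with the Quas--Zamboni Theorem~\ref{Quas-Zamboni-thm}, which produces a periodicity vector out of a single small window. The key observation is that, although the configuration $\eta_{\varphi,x}=\imath_{\varphi}(x)$ and its orbit closure depend on $x$, the right-hand side $P_X(2Nk-2N+n)$ of~\eqref{eq:comp} does not; so as soon as we exhibit \emph{one} window size $(n,k)$ for which this quantity is at most $nk/16$, Theorem~\ref{Quas-Zamboni-thm} applies to every $\eta_{\varphi,x}$ with the same window and hence returns a single finite set $F$, independent of $x$.

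In detail, I would first fix $N\in\N$ so that both $\varphi$ and $\varphi^{-1}$ are sliding block codes of range $N$; such an $N$ exists by Hedlund's theorem, since $\varphi,\varphi^{-1}\in\Aut(X)$. Using the subquadratic growth hypothesis~\eqref{eq:subquadratic} I would then pick $m\in\N$, as large as needed, with $P_X(m)$ much smaller than $m^2$ (say $P_X(m)\le m^2/(256N)$), and set $k=\lfloor m/(4N)\rfloor$ and $n=m-2N(k-1)$. For $m$ large this gives $n,k\ge 1$, the identity $2Nk-2N+n=m$, and $nk$ of order $m^2/N$; in particular one can arrange
\[
P_X(2Nk-2N+n)=P_X(m)\le\frac{nk}{16}.
\]
Note that $(n,k)$, and hence everything below, depends only on $N$ (so only on $\varphi$) and on $X$.

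Now fix any $x\in X$. Because $\overline{\mathcal{O}}(\eta_{\varphi,x})$ is a subshift containing every $S,T$-translate of $\eta_{\varphi,x}$, each $n\times k$ pattern occurring somewhere in $\eta_{\varphi,x}$ occurs at the origin in some element of $\overline{\mathcal{O}}(\eta_{\varphi,x})$; hence $P_{\eta_{\varphi,x}}(n,k)\le P_{\overline{\mathcal{O}}(\eta_{\varphi,x})}(n,k)$, and combining this with Lemma~\ref{complexity-relation} gives
\[
P_{\eta_{\varphi,x}}(n,k)\le P_{\overline{\mathcal{O}}(\eta_{\varphi,x})}(n,k)\le P_X(2Nk-2N+n)\le\frac{nk}{16}.
\]
Applying Theorem~\ref{Quas-Zamboni-thm} to $\eta=\eta_{\varphi,x}$ with this window produces a finite set $F\subset\Z^2\setminus\{(0,0)\}$ depending only on $n$ and $k$, together with a vector $(a_{(\varphi,x)},b_{(\varphi,x)})\in F$ satisfying $\eta_{\varphi,x}\bigl(\vec{y}+(a_{(\varphi,x)},b_{(\varphi,x)})\bigr)=\eta_{\varphi,x}(\vec{y})$ for all $\vec{y}\in\Z^2$. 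Unwinding the definitions of $S$ and $T$, this is exactly the statement $S^{a_{(\varphi,x)}}T^{b_{(\varphi,x)}}\imath_{\varphi}(x)=\imath_{\varphi}(x)$, which completes the argument.

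The only genuine difficulty is the uniformity emphasized above: we must commit to one window $(n,k)$ before we see $x$, so that the set $F$ returned by Quas--Zamboni is the same for all $x\in X$. This is precisely what the $x$-independence of the bound in~\eqref{eq:comp} provides. The remaining ingredients — extracting a suitable $(n,k)$ from the $\liminf$ condition (a short computation; one may instead keep $n=k$ and use that $P_X$ is nondecreasing), and the elementary fact that the rectangular complexity of a configuration is bounded by that of its orbit closure — are routine.
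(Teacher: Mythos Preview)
Your proposal is correct and follows essentially the same approach as the paper: use the subquadratic hypothesis together with the $x$-uniform bound of Lemma~\ref{complexity-relation} to find a single window to which Theorem~\ref{Quas-Zamboni-thm} applies for every $\eta_{\varphi,x}$, and conclude that the resulting finite set $F$ is independent of $x$. The only cosmetic difference is that the paper takes a square window $n=k=n_1$ (using monotonicity of $P_X$, exactly the shortcut you mention at the end), whereas you engineer a rectangular window with $2Nk-2N+n=m$ exactly; both are routine and lead to the same conclusion.
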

\begin{proof}
Suppose
$$
\liminf_{n\to\infty}\frac{P_X(n)}{n^2}=0.
$$
Let $N$ be the range of the block code $\varphi$.  Find the smallest $n_1\in\N$ for which
$$
P_X(2N(n_1-1)+n_1)\leq n_1^2/16.
$$
By Theorem~\ref{Quas-Zamboni-thm}, there exists a finite set $F\subset\Z^2\setminus\{(0,0)\}$ (which depends only on $n_1$ and hence only on the subshift $X$) such that if $\eta\in\A^{\Z^2}$ satisfies $P_{\eta}(n_1,n_1)\leq n_1^2/16$, then there exists $\vec v\in F$ for which $\eta(\vec x+\vec v)=\eta(\vec x)$ for all $\vec x\in\Z^2$.

Let $x\in X$ be fixed.  By~\eqref{eq:comp},
$$
P_{\overline{\mathcal{O}}(\imath_{\varphi}(x))}(n_1,n_1)\leq n_1^2/16.
$$
Thus for some $(a_{\varphi,x},b_{\varphi,x})\in F$, we have that $S^{a_{\varphi,x}}T^{b_{\varphi,x}}\imath_{\varphi}(x)=\imath_{\varphi}(x)$.
\end{proof}

\begin{lemma}\label{lemma:subquad}
Suppose $X$ is a topologically transitive shift of subquadratic growth and let $\varphi\in\Aut(X)$.  Then there exists a vector $(a_{\varphi},b_{\varphi})\in\Z^2\setminus\{(0,0)\}$ such that for all $x\in X$, $S^{a_{\varphi}}T^{b_{\varphi}}\imath_{\varphi}(x)=\imath_{\varphi}(x)$.
\end{lemma}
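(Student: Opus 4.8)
The plan is to upgrade the conclusion of Lemma~\ref{subquadratic}, which gives a \emph{finite} set $F$ of candidate period vectors with the choice depending on $x$, to a \emph{single} period vector valid for every $x\in X$, using topological transitivity together with the fact that the relevant set of points is closed and $\sigma$-invariant.

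First I would apply Lemma~\ref{subquadratic} to produce a finite set $F\subset\Z^2\setminus\{(0,0)\}$, depending only on $\varphi$ and $X$, such that every $x\in X$ admits some $(a_{(\varphi,x)},b_{(\varphi,x)})\in F$ with $S^{a_{(\varphi,x)}}T^{b_{(\varphi,x)}}\imath_{\varphi}(x)=\imath_{\varphi}(x)$. Since $X$ is topologically transitive, fix a point $x_0\in X$ whose $\sigma$-orbit is dense in $X$, and set $(a_{\varphi},b_{\varphi}):=(a_{(\varphi,x_0)},b_{(\varphi,x_0)})\in F$, a period vector for $\imath_{\varphi}(x_0)$.

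Now consider the set $Z:=\{x\in X\colon S^{a_{\varphi}}T^{b_{\varphi}}\imath_{\varphi}(x)=\imath_{\varphi}(x)\}$. Because $\imath_{\varphi}$ is continuous and $S,T$ are homeomorphisms, $Z$ is closed. It is also $\sigma$-invariant: if $x\in Z$, then using $\imath_{\varphi}\circ\sigma=S\circ\imath_{\varphi}$ and the fact that $S$ commutes with $S^{a_{\varphi}}T^{b_{\varphi}}$, one computes $S^{a_{\varphi}}T^{b_{\varphi}}\imath_{\varphi}(\sigma x)=S\bigl(S^{a_{\varphi}}T^{b_{\varphi}}\imath_{\varphi}(x)\bigr)=S\,\imath_{\varphi}(x)=\imath_{\varphi}(\sigma x)$, so $\sigma x\in Z$; the same computation with $\sigma^{-1}$ (using $\imath_{\varphi}\circ\sigma^{-1}=S^{-1}\circ\imath_{\varphi}$) shows $\sigma^{-1}x\in Z$. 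Hence $Z$ is a closed, $\sigma$-invariant subset of $X$ containing $x_0$, so it contains the closure of the $\sigma$-orbit of $x_0$, which is all of $X$. Therefore $Z=X$, i.e.\ $S^{a_{\varphi}}T^{b_{\varphi}}\imath_{\varphi}(x)=\imath_{\varphi}(x)$ for every $x\in X$, and since $(a_{\varphi},b_{\varphi})\in F\subset\Z^2\setminus\{(0,0)\}$ it is nonzero, as required.

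I do not expect a serious obstacle here: the argument is the standard principle that a closed invariant set meeting the orbit of a transitive point must be the whole space, combined with the elementary bookkeeping that the period relation $S^{a_{\varphi}}T^{b_{\varphi}}\imath_{\varphi}(x)=\imath_{\varphi}(x)$ is preserved under $\sigma$ via the conjugacy identities for $\imath_{\varphi}$. The only points that need care are that transitivity is invoked in the form ``there exists a point with dense $\sigma$-orbit,'' and that the set $F$ supplied by Lemma~\ref{subquadratic} is genuinely independent of $x$, so that the vector extracted from $x_0$ is legitimately a fixed nonzero element of $\Z^2$.
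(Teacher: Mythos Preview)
Your argument is correct and is genuinely simpler than the paper's. The paper does not pass through a transitive point; instead it analyzes, for each $x$, the set $V_x\subseteq F$ of period vectors of $\imath_{\varphi}(x)$, splits into cases according to whether every $V_x$ spans a plane (the doubly periodic case, handled by bounding the vertical period and taking a factorial) or some $V_x$ spans only a line, and in the latter case further splits according to whether all the one-dimensional directions coincide (handled by taking an lcm) or two of them differ (ruled out by building, via transitivity, a point $\xi$ that simultaneously witnesses the failure of every vector in $F\setminus V_{x_1}$ and every vector in $F\setminus V_{x_2}$ to be a period, contradicting Lemma~\ref{subquadratic}). Your route bypasses all of this: once one has a transitive point $x_0$, the closed $\sigma$-invariant set $Z=\{x:S^{a}T^{b}\imath_{\varphi}(x)=\imath_{\varphi}(x)\}$ with $(a,b)=(a_{(\varphi,x_0)},b_{(\varphi,x_0)})$ must be all of $X$. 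As a bonus, your argument produces $(a_{\varphi},b_{\varphi})\in F$ itself, whereas the paper's case analysis may output a multiple such as $(0,M!)$ or $N\cdot\vec v$. The only point worth making explicit is the one you already flag: for a subshift (a compact metric space with a homeomorphism), the open-set form of topological transitivity used in the paper is equivalent, via the Baire category theorem, to the existence of a point with dense $\sigma$-orbit, so invoking such an $x_0$ is legitimate.
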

\begin{proof}
By Lemma~\ref{subquadratic} there exists a finite set $F\subseteq\Z^2\setminus\{(0,0)\}$ such that for all $x\in X$ there exists $\vec v\in F$ such that $\imath_{\varphi}(x)$ is periodic with period vector $\vec v$.  For each $x\in X$ let
$$
V_x:=\{\vec v\in F\colon\vec v\text{ is a period vector of }\imath_{\varphi}(x)\}.
$$
Since $V_x\subseteq F$ and $F$ is finite, there exists $M\in\N$ such that whenever $V_x$ contains two linearly independent vectors (so that $\imath_{\varphi}(x)$ is doubly periodic) the vertical period of $\imath_{\varphi}(x)$ is at most $M$.  Therefore, if $V_x$ contains two linearly independent vectors for all $x\in X$, then $\imath_{\varphi}(x)$ is vertically periodic with period vector $(0,M!)$ for all $x\in X$.  In this case,  the vector $(a_{\varphi},b_{\varphi})=(0,M!)$ satisfies the conclusion of the lemma.

We are left with showing that if there exists $x\in X$ such that all of the vectors in $V_x$ are collinear, then there exists $(a_{\varphi},b_{\varphi})\in F$ such that $\imath_{\varphi}(x)$ is periodic with period $(a_{\varphi},b_{\varphi})$ for all $x\in X$.  Let 
$$
B:=\{x\in X\colon\dim({\rm Span}(V_x))=1\}
$$
be the set of ``bad points'' in $X$.  For each $x\in B$ let $v(x)$ be a shortest nonzero integer vector that spans ${\rm Span}(V_x)$ (there are two possible choices).  Fix some $x_0\in B$ and let $\vec v=v(x_0)$.  There are two cases to consider:

\medskip

\noindent {\em Case 1}: Suppose that $v(x)$ is collinear with $v(y)$ for any $x,y\in B$.  Fix $x_0\in B$ and let $\vec v\in\Z^2\setminus\{(0,0)\}$ be a shortest integer vector parallel to $v(x_0)$ (there are two possible choices).  Then for all $x\in B$, there exists $n_x\in\Z$ such that $v(x)=n_x\cdot v_{x_0}$.  Since $v(x)\in F$ and $F$ is finite, $\{n_x\colon x\in B\}$ is bounded.  For all $y\in X\setminus B$ the coloring $\imath_{\varphi}(y)$ is doubly periodic.  For each such $y$, choose $n_y\in\Z$ such that $n_y\cdot\vec v$ is a shortest nonzero period vector for $\imath_{\varphi}(y)$ parallel to $\vec v$.  Since $\imath_{\varphi}(y)$ has two linearly independent period vectors in $F$, the set $\{n_y\colon y\in X\setminus B\}$ is bounded.  Therefore if $N$ is the least common multiple of $\{|n_z|\colon z\in X\}$, then $N\cdot\vec v$ is a period vector for $\imath_{\varphi}(z)$ for all $z\in X$.  In this case, set $(a_{\varphi},b_{\varphi}):=N\cdot\vec v$.

\medskip

\noindent {\em Case 2}: Suppose there exist $x_1, x_2\in B$ such that $v(x_1)$ is not collinear with $v(x_2)$.  We  obtain a contradiction in this case, thereby completing the proof of the lemma.  Since $\dim({\rm Span}(V_{x_1}))=1$, for any $\vec w\in F\setminus V_{x_1}$ there exists $\vec y_{\vec w}\in\Z^2$ such that 
\begin{equation}\label{eq1}
\eta_{\varphi,x_1}(\vec y_{\vec w})\neq\eta_{\varphi,x_1}(\vec y_{\vec w}+\vec w).
\end{equation}
Choose $N_1\in\N$ such that the restriction of $\eta_{\varphi,x_1}$ to the set $\{(x,0)\colon-N_1\leq x\leq N_1\}$ determines $\eta_{\varphi,x_1}(\vec y_{\vec w})$ and $\eta_{\varphi,x_1}(\vec y_{\vec w}+\vec w)$ for all $\vec w\in F\setminus V_{x_1}$.  Similarly, for $\vec w\in F\setminus V_{x_2}$, there exists $\vec z_{\vec w}\in\Z^2$ such that
\begin{equation}\label{eq2}
\eta_{\varphi,x_2}(\vec z_{\vec w})\neq\eta_{\varphi,x_2}(\vec z_{\vec w}+\vec w).
\end{equation}
Choose $N_2\in\N$ such that the restriction of $\eta_{\varphi,x_2}$ to the set $\{(x,0)\colon-N_2\leq x\leq N_2\}$ determines $\eta_{\varphi,x_2}(\vec z_{\vec w})$ and $\eta_{\varphi,x_2}(\vec z_{\vec w}+\vec w)$ for all $\vec w\in F\setminus V_{x_2}$.

By topological transitivity of $(X,\sigma)$, there exists $\xi\in X$ and $a,b\in\Z$ such that
	\begin{eqnarray*}
	\xi(i-a)&=& x_1(i)\text{ for all }-N_1\leq i\leq N_1; \\
	\xi(i-b)&=& x_2(i)\text{ for all }-N_2\leq i\leq N_2.
	\end{eqnarray*}
Therefore for any $\vec w\in F\setminus V_{x_1}$ we have
	\begin{eqnarray}
	\eta_{\varphi,\xi}(\vec y_{\vec w}-(a,0))&=&\eta_{\varphi,x_1}(\vec y_{\vec w});\label{eq:s1} \\
	\eta_{\varphi,\xi}(\vec y_{\vec w}+\vec w-(a,0))&=&\eta_{\varphi,x_1}(\vec y_{\vec w}+\vec w),\label{eq:s2}
	\end{eqnarray}
and for any $\vec w\in F\setminus V_{x_2}$ we have
	\begin{eqnarray}
	\eta_{\varphi,\xi}(\vec y_{\vec w}-(b,0))&=&\eta_{\varphi,x_1}(\vec y_{\vec w});\label{eq:s3} \\
	\eta_{\varphi,\xi}(\vec y_{\vec w}+\vec w-(b,0))&=&\eta_{\varphi,x_1}(\vec y_{\vec w}+\vec w).\label{eq:s4}
	\end{eqnarray}
By Lemma~\ref{subquadratic}, $\eta_{\varphi,\xi}$ is periodic and its period vector lies in $F$.  Combining equations~\eqref{eq1},~\eqref{eq:s1}, and~\eqref{eq:s2} we see this vector is not an element of the set $F\setminus V_{x_1}$.  Similarly, by combining equations~\eqref{eq2},~\eqref{eq:s3}, and~\eqref{eq:s4}, we see this vector is not in the set $F\setminus V_{x_2}$.  Since $V_{x_1}\cap V_{x_2}=\emptyset$,  we obtain the desired contradiction.
\end{proof}

We use this lemma to complete the proof of Theorem~\ref{subquad-thm}:
\begin{proof}[Proof of Theorem~\ref{subquad-thm}]
Suppose $X$ is a shift of subquadratic growth and let $\varphi\in\Aut(X)$.  By Lemma~\ref{lemma:subquad}, there exists $(a_{\varphi},b_{\varphi})\in\mathbb{Z}^2\setminus\{(0,0)\}$ such that $S^{a_{\varphi}}T^{b_{\varphi}}\imath_{\varphi}(x)=\imath_{\varphi}(x)$ for all $x\in X$.  Since $\imath_{\varphi}$ is a topological conjugacy between $(X,\sigma,\varphi)$ and $(\imath_{\varphi}(X),S,T)$, we have that $\sigma^{a_{\varphi}}\varphi^{b_{\varphi}}x=x$ for all $x\in X$ and so $\varphi^{b_{\varphi}}=\sigma^{-a_{\varphi}}$.  Thus if $H$ is the subgroup of $\Aut(X)$ generated by the powers of $\sigma$, the projection of $\varphi^{b_{\varphi}}$ to $\Aut(X)/H$ is the identity.

Since this argument can be applied to any $\varphi\in\Aut(X)$ (where the parameters $a_{\varphi}$ and $b_{\varphi}$ depend on $\varphi$), it follows that $\Aut(X)/H$ is a periodic group.  

\end{proof}

\end{document}